\def\d{\mathrm{d}}
\def\e{\mathrm{e}}
\def\R{\mathbb{R}}
\def\X{\mathcal{X}}
\def\contract{\rfloor}
\DeclareMathOperator{\var}{Var}
\newcommand{\TheTitle}{The Barycenter Method for Direct Optimization} 
\newcommand{\TheAuthors}{F. Pait}
\headers{\TheTitle}{\TheAuthors}
\title{{\TheTitle}\thanks{Submitted to the editors DATE.
}}
\author{
  Felipe Pait\thanks{Universidade de São Paulo, \textsc{Brazil}
    (\email{pait@usp.br}).}
 }
\DeclareMathOperator{\tr}{Tr}
\begin{document}

\maketitle

\begin{abstract} 


A randomized version of the recently developed barycenter method for derivative--free optimization  has desirable properties of a gradient search. We develop a complex version to avoid evaluations at high--gradient points. The method, applicable to non--smooth functions, is parallelizable in a natural way and shown to be robust under noisy measurements.

  
\end{abstract}

\begin{keywords}
  Direct optimization, derivative--free methods
\end{keywords}

\begin{AMS}
  90C30, 90C56
\end{AMS}

\section{The barycenter method}
I wish to search for a minimizer of a function $f : \X \rightarrow \R$ using the formula
\begin{equation}
\label{eq:barycenter}
\hat{x}_n = \frac{\sum_{i=1}^n x_i \e^{- \nu f(x_i)}}{\sum_{i=1}^n \e^{- \nu f(x_i)}},
\end{equation}
which expresses the  center of mass or barycenter of $n$ test points $x_i  \in \X \subset \R^{n_x}$ weighted according to the exponential of the  value of the function at each point. In this formula $\nu \in \R$ is a positive constant. The point $\hat{x}_n$ will be in the convex hull of the $\{x_i\}$, and we shall assume that the $n_x$--dimensional search space $\X$ is convex. Applications of the method to nonconvex search spaces are conceivable but shall not be entertained here. The rationale behind it is that points where $f$ is large receive low weight in comparison with those for which $f$ is small. 
Formula~\eqref{eq:barycenter} is suitable for {direct optimization, once scorned} \cite{once-scorned-wright}, now a respectable research area, both for its scientific challenges and practical applications. Also known as  \textsl{derivative--free optimization}, it deals with the search for extrema of a given function, employing only the values of the function and not its mathematical expression. A recent book  \cite{derivative-free-book} serves as a good entry point to the literature.
When the 1st and perhaps also 2nd derivatives of the function are available, use of gradients and Hessians leads to steepest--descent and Newton--like search algorithms. Often however derivatives are costly or impossible to compute. The challenge in direct optimization is to obtain algorithms with comparable performance, without knowledge of the derivatives. 

The barycenter method has been employed successfully to tune filters  in system identification \cite{MOLI-transactions}, see also \cite{RomanoCDC:2014}. 
The filter parameters are additional structure parameters that need to be chosen before the model parameters themselves can be optimized, and may  be said to perform a role similar to that of hyper--parameters in machine learning. With that experience in mind, the use of the barycenter method  to tune hyper--parameters seems worth exploring.
A continuous--time version of the barycenter algorithm was analyzed in \cite{pait2014reading}. Some aspects of the method and its applications were presented at SIAM conferences \cite{SiamSDiego2014,SiamBoston2016}.

A recursive version of the barycenter method follows in
\Cref{sec:recursive}. A randomized search procedure is given a probabilistic analysis in \Cref{sec:gradient}, demonstrating that it exhibits convenient gradient descent--like properties. A complex version is shown,  in \Cref{sec:complex}, to put less weight on queries made in regions where the rate of change of the goal function is large, thus discounting tests made away from candidate extrema; it may prove to be particularly  suitable for situations in which the goal function cannot be fully evaluated at each optimization step. The method's robustness  is discussed in \Cref{sec:noisy}, making a contribution to the study of optimization  under noisy measurements. \Cref{sec:paralipomena} contains applications, generalizations, connections with sundry ideas in the literature, and directions for further work. Some technical proofs are put off until \Cref{sec:tech}.


\section{Recursive algorithm}
\label{sec:recursive}

The barycenter or center of mass of a distribution of  weights $\e^{-\nu f(x_i)}$ placed at points $x_i$ is the point $x \in \R^{n_\rho}$ that minimizes the weighted sum of distances
\begin{equation*}
\label{eq:sum-of-distances}
\sum_{i=1}^n (x - x_i)^2 \e^{-\nu f(x_i)}.
\end{equation*}
Indeed, the sum is minimized when
\[\frac{\partial}{\partial x}  \sum_{i=1}^n \left. (x-x_i)^2 \e^{-\nu f(x_i)}\right|_{\hat{x}_n} 
= 2  \sum_{i=1}^n \left. (x-x_i) \e^{-\nu f(x_i)}\right|_{\hat{x}_n}  = 0,
\]
so that 
$ \hat{x}_n \sum_{i=1}^n \e^{-\nu f(x_i)} = \sum_{i=1}^n x_i \e^{-\nu f(x_i)}$, leading to \eqref{eq:barycenter}. Now writing $m_n = \sum_{i=1}^n \e^{- \nu f(x_i)}$ gives
\[
\hat{x}_n m_n = x_n  \e^{-\nu f(x_n)} + \sum_{i=1}^{n-1} x_i \e^{-\nu f(x_i)}
= x_n  \e^{-\nu f(x_n)} + m_{n-1} \hat{x}_{n-1},
\] 
from which we obtain  the recursive formulas
\begin{align}
m_n & = m_{n-1} + \e^{-\nu f(x_n)} \label{eq:mass_update} \\
\hat{x}_n & = \frac{1}{m_n} \left( m_{n-1} \hat{x}_{n-1} + \e^{-\nu f(x_n)} x_n \right). \label{eq:bary_update} 
\end{align}
Here $m_0 = 0, \hat{x}_0 $ is arbitrary, and $x_n$ is the 
 sequence of test values.

From the point of view of recursive search strategies, it can be useful to pick the sequence of test points $x_n$ as the sum of the barycenter $\hat{x}_{n-1}$ of the previous points and a  ``curiosity'' or exploration term $z_n$:
\begin{equation}
x_n  = \hat{x}_{n-1} + z_n \label{eq:periergia}.
\end{equation}
Then \eqref{eq:bary_update} reads
\begin{equation}
\hat{x}_{n} -\hat{x}_{n-1}   = 
\frac{\e^{-\nu f(x_n)}}{m_{n-1} + \e^{-\nu f(x_n)}}z_n \label{eq:periergia2}.
\end{equation}

Although the size of the steps $\hat{x}_{n} -\hat{x}_{n-1} \rightarrow 0$ as $n \rightarrow \infty$ if the function $f$ is bounded in the search set, without further assumptions on the function $f$ and on the sequence of oracle queries $x_n$
we cannot  be sure that the sequence is Cauchy.\footnote{This analysis was communicated to the author by Guilherme Vicinansa. }  
Lack of assured convergence of the barycenter $\hat{x}_{n}$  is a desirable property, because convergence should come only as a consequence of judicious choice of the sequence of tests.
%

We have not specified how the oracle queries are to be selected, and in fact the barycenter method is not in conflict with derivative--free search algorithms developed in the literature, such as Nelder--Mead's, trust--region methods, or in fact \textsl{any} combination of such strategies.  
 In \Cref{sec:gradient} a simple randomized strategy is presented and analyzed.

\section{Probabilistic analysis}
\label{sec:gradient}

{A randomized version} of the barycenter algorithm can be studied using  formula \eqref{eq:periergia2}.
If we consider $\hat{x}_n$ to be our best guess, on the basis of the information provided by the tests up to $x_n$, of where the minimum of $f(\cdot)$ might be found, then in the absence of any extra knowledge it makes sense to pick the curiosity $z_n$ as a random variable with some judiciously chosen probability distribution. 
In light of the central limit theorem, we will analyze the case where $z_n$ is  normal.

   With the goal of obtaining approximate formulas for the barycenter update rule, in the remainder of this section we  will assume  that $f(\cdot)$ is twice continuously differentiable with respect to the argument $x$. Differentiability is not required for the barycenter method to be useful, and this assumption will be weakened later on, in \Cref{sec:noisy}. 
   Define $F_n(z) = \frac{\e^{-\nu f(\hat{x}_{n-1} + z)}}{m_{n-1} + \e^{-\nu f(\hat{x}_{n-1} + z)}}$, and 
for subsequent use write
\[\bar{F}_n (z) = 
  \frac{m_{n-1} \e^{-\nu f(\hat{x}_{n-1} + z)}}{(m_{n-1} + \e^{-\nu f(\hat{x}_{n-1} + z)})^2} = 
  \frac{m_{n-1} }{m_{n-1} + \e^{-\nu f(\hat{x}_{n-1} + z)}} F_n ,\]
 so that $\frac{\partial F}{\partial z} = - \nu \bar{F} \frac{\partial f}{\partial z}.$ Here and in the computations that follow the subscript indicating dependence of the sample ordinality $n$ is omitted if there is no ambiguity. 

\begin{theorem} \label{thm:average}
If $z_n$ has a Gaussian  distribution, the expected value of $\Delta\hat{x}_n = \hat{x}_n - \hat{x}_{n-1}$ is proportional to the average value of the gradient of $f(\hat{x}_{n-1} + z_n)$ in the support of the distribution of $z$.
\end{theorem}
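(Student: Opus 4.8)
The plan is to start from the single--step update already derived, written in the compact form $\Delta\hat{x}_n = F_n(z_n)\,z_n$, which is merely a rewriting of \eqref{eq:periergia2} together with the definition of $F_n$, and then to compute $\mathrm{E}[\Delta\hat{x}_n] = \mathrm{E}[F_n(z_n)\,z_n]$ by exploiting the algebraic structure of the Gaussian density rather than the specific form of $f$. Taking $z_n \sim \mathcal{N}(0,\Sigma)$ with density $p$ (a nonzero mean would only add a drift term proportional to $\mathrm{E}[F_n(z_n)]$ times that mean), the key identity is $z\,p(z) = -\Sigma\,\nabla_z p(z)$, the multivariate form of Stein's lemma. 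Substituting this into $\int F_n(z)\,z\,p(z)\,\d z$ and integrating by parts coordinatewise yields $\mathrm{E}[\Delta\hat{x}_n] = \Sigma\,\mathrm{E}[\nabla_z F_n(z_n)]$; the boundary terms vanish because $F_n$ takes values in $(0,1)$ while $p$ and $\nabla_z p$ decay to zero at infinity.

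The second step is to reintroduce the goal function through the identity $\partial F/\partial z = -\nu\,\bar{F}\,\partial f/\partial z$ recorded just before the statement, giving
\[
\mathrm{E}[\Delta\hat{x}_n] = -\nu\,\Sigma\,\mathrm{E}\!\left[\bar{F}_n(z_n)\,\nabla_z f(\hat{x}_{n-1}+z_n)\right].
\]
Dividing and multiplying by $\mathrm{E}[\bar{F}_n(z_n)]$ identifies the right--hand side as a scalar multiple of the average of $\nabla f$ against the probability density proportional to $\bar{F}_n(z)\,p(z)$ on the support of $z_n$ — that is, of the gradient of $f(\hat{x}_{n-1}+z_n)$ sampled over the region the test point actually explores. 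When the curiosity is isotropic, $\Sigma=\sigma^2 I$, this is literally a negative scalar times that averaged gradient, which is the asserted proportionality; since $\nu,\sigma^2,\bar{F}_n>0$ it also exhibits $\mathrm{E}[\Delta\hat{x}_n]$ as a descent direction for the averaged gradient, the announced gradient--like property.

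I expect the main obstacle to be not the computation but its justification: interchanging expectation with differentiation and discarding the boundary terms in the integration by parts requires an integrable majorant for $\bar{F}_n(z)\,\nabla f(\hat{x}_{n-1}+z)\,p(z)$. Since $F_n$ and $\bar{F}_n$ are bounded by construction and $p$ is Gaussian, it suffices that $\nabla f$ grow no faster than polynomially on $\X$ — automatic when $\X$ is bounded, and otherwise a mild standing hypothesis consistent with the twice--continuous--differentiability assumption already in force; I would relegate this estimate to \Cref{sec:tech}. A secondary point deserving a sentence is the reading of ``average in the support'': because the law of $z_n$ already concentrates near the origin, the weight $\bar{F}_n$ only re--weights a neighbourhood of $\hat{x}_{n-1}$, and to first order in $\nu$ (or for $m_{n-1}$ large) it is nearly constant there, so the weighted average is well approximated by the plain expectation $\mathrm{E}[\nabla f(\hat{x}_{n-1}+z_n)]$ — in keeping with the ``approximate formulas'' caveat stated before the theorem.
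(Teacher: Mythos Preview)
Your proposal is correct and follows essentially the same route as the paper: the Stein--type identity $z\,p(z)=-\Sigma\,\nabla p(z)$ (the paper writes it for general mean $\bar z$, recovering the extra drift term $E[F_n]\,\bar z$ that you mention parenthetically), integration by parts with boundary terms discarded because $F_n$ is bounded and $p$ vanishes at infinity, and then the chain rule $\partial F/\partial z=-\nu\bar F\,\partial f/\partial z$ to land on \eqref{eq:approx-grad}. Your added remarks on integrability hypotheses and on reading the result as a $\bar F_n p$--weighted average of $\nabla f$ are sound elaborations but not departures from the paper's argument.
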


\begin{proof}
The  claim is established by a  calculation, which in an appropriate notation is straightforward. Consider the probability density function
\[ 
p(z)= \frac{1}{\sqrt{(2\pi)^n |\Sigma|}} e^{- \frac{1}{2}(z- \bar{z})^T\Sigma^{-1}(z- \bar{z})}.
\]
Then $\frac{\partial p}{\partial z^\beta} = - \Sigma^{-1}_{\beta \alpha } (z^\alpha- \bar{z}^\alpha) p(z) $ so $z^\alpha p =  \bar{z}^\alpha p - \Sigma^{\alpha \beta} \frac{\partial p}{\partial z^\beta}$. Einstein's implicit summation of components with equal upper and lower indices convention is in force, with upper Greek indices for the components of $z$ and of $\Sigma$, and lower indices for the components of $\Sigma$'s inverse. 

With $\mathcal X$  the $n$--dimensional set where the curiosity $z$ takes its values, for each component $z^\alpha$ of the vector $z$ we have
\begin{equation*}
E \left [ F(z) z^\alpha \right]  = \int_{\mathcal X} F(z) z^\alpha p(z) \, \d z 
= \int_{\mathcal X} F(z) \bar{z}^\alpha p(z) \, \d z 
- \Sigma^{\alpha \beta} \int_{\mathcal X} F(z)  \frac{\partial p}{\partial z^\beta}(z) \, \d z.
\end{equation*}
Using the integration--by--parts formula  in Lemma~\ref{lemma:int-by-parts},
\begin{equation*}
 \int_{\mathcal X} F(z)  \frac{\partial p}{\partial z^\beta} \, \d z 
 +  \int_{\mathcal X} \frac{\partial F}{\partial z^\beta}   p(z) \, \d z 
 =  \int_{\partial \mathcal X} F(z) p(z) \left (\frac{\partial}{\partial z^\beta} \contract \d z \right),
\end{equation*}
where $\frac{\partial}{\partial z^\beta} \contract \d z$ is an $(n-1)$--form which can be integrated at the boundary $\partial {\mathcal X}$ of the $n$--dimensional set $ {\mathcal X}$. The right--hand side is zero, because $F$ is bounded and  $p(z)$ vanishes at the border of $\mathcal X$, which is at infinity, hence
\begin{multline*}
E \left [ F(z) z^\alpha \right] 
= \bar{z}^\alpha \int_{\mathcal X} F(z)  p(z) \, \d z 
+\Sigma^{\alpha \beta} \int_{\mathcal X} \frac{\partial F}{\partial z^\beta} p(z) \, \d z\\
= E \left [ F(z)  \right] \bar{z}^\alpha 
-\nu  \Sigma^{\alpha \beta} \int_{\mathcal X} \bar{F} (z) \frac{\partial }{\partial z^\beta}\left( f(\hat{x}_{n-1} + z)\right)  p(z)\,  \d z,
\end{multline*}
so 
\begin{equation}
\label{eq:approx-grad}
\boxed
{
E \left [ \Delta\hat{x}_n \right] = E \left [ F_n (z)  \right]  \bar{z} -\nu  \Sigma \, E\left[ \bar{F_n}(z) \nabla f(\hat{x}_{n-1} + z) \right],
}
\end{equation} 
where the 2nd term is proportional to the negative gradient of $f$ as claimed. 
\end{proof}

Formula \eqref{eq:approx-grad} is a key result concerning the barycenter method. It shows that roughly speaking a random search performed in conjunction with the barycenter algorithm  follows   the direction of the negative average  gradient of the function to be minimized, the weighted average being taken over the domain where the search is performed. For a given $\nu$, the step size is essentially given by   $\Sigma$. Depending on the shape of the function $f$, large values of the variance of $z$ may compromise the descent property of the search method.

The term in $\bar{z}$ can be employed to incorporate extra knowledge in several manners. For example, if at each step we take $\bar{z} = \xi \; \Delta\hat{x}_{n-1}$, then the gradient term is responsible for the rate of change, or acceleration, of the search process. The factor $0 < \xi < 1$ is chosen to dampen oscillations and prevent instability. The case $\xi = 0$ corresponds to the garden--variety, non--accelerated gradient--like search.
In the context of intended applications to adaptive controller and filter tuning, here the author cannot resist citing his seldom--read paper \cite{acelera-letters} discussing tuners that set the 2nd derivative of the adjusted parameters, rather than the 1st derivative, as is more often done in the literature, as well as an application to filtering theory \cite{max-pait-jojoa}, where the 2nd difference is used in a discrete--time version.

The designer has the freedom to choose the free parameters $\nu$, $\Sigma$, and $\bar{z}$ of the randomized barycenter search procedure in order to achieve the most desirable convergence properties. 
{Understanding of the variance of}  $\Delta\hat{x}_n $, which depends on  the Hessian $\nabla^2 f$ of $f$, is  useful in picking these free parameters. 

\begin{theorem} \label{thm:variance}
Under the conditions of Theorem~\ref{thm:average} and assuming that the variance of $z$ is small, the variance of $\Delta\hat{x}_n$ for $\bar{z} = 0$ near a critical point of $f(x)$ where $\nabla f= 0$ is approximately 
\begin{equation}
\label{eq:approx-variance}
\mathrm{Var} (\Delta \hat{x}) \approx 
 \Sigma E[F^2 ] - 2\nu \Sigma^T E\left[  F  \bar{F} \nabla^2 f \right] \Sigma.
\end{equation}
\end{theorem}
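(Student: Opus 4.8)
The plan is to compute the second moment $E[\Delta\hat{x}_n (\Delta\hat{x}_n)^T] = E[F(z)^2 z z^T]$ using the same Gaussian integration-by-parts machinery that produced \eqref{eq:approx-grad}, then subtract the outer product of the mean and simplify under the stated approximations ($\bar{z}=0$, small $\Sigma$, $\nabla f = 0$ at the point). First I would set $\bar{z}=0$ so that $z^\alpha p = -\Sigma^{\alpha\beta}\partial p/\partial z^\beta$, and write $E[F^2 z^\alpha z^\gamma] = -\Sigma^{\alpha\beta}\int_{\mathcal X} F^2 z^\gamma\, \partial p/\partial z^\beta\, \d z$. Integrating by parts via Lemma~\ref{lemma:int-by-parts}, the boundary term again vanishes ($F$ bounded, $p\to 0$ at infinity), leaving $E[F^2 z^\alpha z^\gamma] = \Sigma^{\alpha\beta} E\!\left[\frac{\partial}{\partial z^\beta}\big(F^2 z^\gamma\big)\right] = \Sigma^{\alpha\beta} E\!\left[ 2F\,\frac{\partial F}{\partial z^\beta} z^\gamma + F^2 \delta^\gamma_\beta\right]$. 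Using $\partial F/\partial z^\beta = -\nu \bar F\, \partial f/\partial z^\beta$ from the identity established before Theorem~\ref{thm:average}, this becomes $\Sigma^{\alpha\gamma} E[F^2] - 2\nu\,\Sigma^{\alpha\beta} E\!\left[ F\bar F\, \frac{\partial f}{\partial z^\beta}\, z^\gamma\right]$.

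The second step is to handle the remaining term $E[F\bar F\, \partial_\beta f\, z^\gamma]$ near the critical point. Since $\bar z = 0$ the mean $E[\Delta\hat{x}]$ is already $O(\nu)$ and, more importantly, $\nabla f(\hat{x}_{n-1})=0$ forces a Taylor expansion $\partial_\beta f(\hat{x}_{n-1}+z) \approx (\nabla^2 f)_{\beta\delta} z^\delta$ to leading order in the small-variance regime. Substituting, $E[F\bar F\,\partial_\beta f\, z^\gamma] \approx (\nabla^2 f)_{\beta\delta}\, E[F\bar F\, z^\delta z^\gamma] \approx (\nabla^2 f)_{\beta\delta}\, \Sigma^{\delta\gamma}\, E[F\bar F]$, where the last step evaluates the Gaussian second moment at leading order (with $F,\bar F$ treated as slowly varying relative to the narrow Gaussian, so they come out as $E[F\bar F]$ times $\Sigma^{\delta\gamma}$). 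This yields the cross term $-2\nu\,\Sigma^T E[F\bar F\,\nabla^2 f]\,\Sigma$. Finally, $\mathrm{Var}(\Delta\hat x) = E[\Delta\hat x\,\Delta\hat x^T] - E[\Delta\hat x]\,E[\Delta\hat x]^T$; since $E[\Delta\hat x] = O(\nu\Sigma)$ and is itself first order in the Hessian near the critical point, its outer product is of higher order and is absorbed into the approximation, leaving \eqref{eq:approx-variance}.

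I expect the main obstacle to be making the "small variance'' approximation precise enough to justify pulling the smooth factors $E[F^2]$ and $E[F\bar F]$ outside the moment integrals while keeping only the leading Hessian contribution — i.e., controlling the error terms from the Taylor remainder of $f$ and from the variation of $F,\bar F$ across the support of $z$. A secondary bookkeeping point is keeping the index placement consistent (the $\Sigma^T$ versus $\Sigma$ in \eqref{eq:approx-variance} reflects that $\Sigma$ is symmetric, but one should check the contraction pattern $\Sigma^{\alpha\beta}(\nabla^2 f)_{\beta\delta}\Sigma^{\delta\gamma}$ really does produce $\Sigma^T(\nabla^2 f)\Sigma$ as written). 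Both of these are routine once the expansion order is fixed, so I would state the result as an asymptotic identity valid to leading order in $\|\Sigma\|$ and in $\nu$, and relegate the remainder estimates to \Cref{sec:tech}.
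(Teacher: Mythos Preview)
Your first integration by parts and the identification of the $\Sigma^{\alpha\gamma}E[F^2]$ term match the paper exactly. The divergence comes in how you treat the remaining term $E\big[F\,\tfrac{\partial F}{\partial z^\beta}\,z^\gamma\big]$. You substitute $\partial_\beta F=-\nu\bar F\,\partial_\beta f$ immediately, Taylor--expand $\partial_\beta f(\hat x_{n-1}+z)\approx(\nabla^2 f)_{\beta\delta}z^\delta$ around the critical point, and then invoke a slow--variation argument to replace $E[F\bar F\,z^\delta z^\gamma]$ by $E[F\bar F]\,\Sigma^{\delta\gamma}$. The paper instead applies the Gaussian identity $z^\gamma p=-\Sigma^{\gamma\delta}\partial_\delta p$ a \emph{second} time and integrates by parts again, obtaining the exact expression
\[
E[F^2 z^\gamma z^\alpha]=\Sigma^{\alpha\gamma}E[F^2]+2\Sigma^{\alpha\beta}\Sigma^{\gamma\delta}
E\Big[\tfrac{\partial F}{\partial z^\delta}\tfrac{\partial F}{\partial z^\beta}
+F\,\tfrac{\partial^2 F}{\partial z^\delta\partial z^\beta}\Big],
\]
and only then drops the terms containing $\partial F/\partial z$ (small near the critical point) and identifies $\partial^2 F/\partial z^\delta\partial z^\beta\approx-\nu\bar F\,(\nabla^2 f)_{\delta\beta}$.

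Both routes are valid at the stated order and land on \eqref{eq:approx-variance}. The paper's double integration by parts has the advantage of staying exact longer and making transparent exactly which terms are being discarded (those proportional to $\partial F/\partial z\sim\nabla f$); it also keeps $\nabla^2 f$ inside the expectation as written. Your route is quicker but stacks two approximations (Taylor remainder of $\nabla f$ plus the slow--variation replacement), and strictly speaking produces $(\nabla^2 f)_{\beta\delta}\big|_{\hat x_{n-1}}E[F\bar F]$ rather than $E[F\bar F\,\nabla^2 f]$; these agree only to leading order in $\|\Sigma\|$, so be careful when you write the final formula. Your remarks about the $(E[\Delta\hat x])^{\otimes 2}$ term being higher order and about the $\Sigma^T$ bookkeeping are fine and mirror the paper's treatment.
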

The proof, presented in \Cref{sec:tech}, is similar to the computation of the mean. What Theorem~\ref{thm:variance}
tells us is that, near a  minimum of a locally convex function,  the variance of the adjustment step grows less than linearly with the variance of the curiosity; the higher the Hessian and the larger $\nu$ is, the smaller the variance. This is a desirable property of the method, because it indicates that the barycenter moves around less than the test points. Formula \eqref{eq:approx-variance} is only approximate, as can be confirmed by considering that the variance of $\Delta \hat{x}$  can only grow with increasing $\Sigma$ and must remain positive--definite.

 Note that by construction both functions $F$ and $\bar{F}$ are between 0 and 1.  Roughly speaking they tend to decrease together as $n$ becomes larger. If such a decrease is unwelcome, one course of action is to employ a forgetting factor as suggested in \Cref{sec:paralipomena}. To develop an intuition on what happens to the rate of adjustment $\Delta \hat{x}$ over time, it can be useful to consider a situation where the search steps are small and all variables take their mean value. In this case we have
$F_n \approx \frac{\e^{-\nu f(\hat{x}_{n-1})}}{m_{n-1} + \e^{-\nu f(\hat{x}_{n-1})}}$ and
\begin{multline*}
F_{n+1} = \frac{\e^{-\nu f(\hat{x}_{n} + z)}}{m_{n} + \e^{-\nu f(\hat{x}_{n} + z)}} 
\approx \frac{\e^{-\nu f(\hat{x}_{n-1} + \Delta \hat{x}_n)}}{m_{n-1} + \e^{-\nu f(\hat{x}_{n-1})} + \e^{-\nu f(\hat{x}_{n})}} \\
\approx \frac{\e^{-\nu f(\hat{x}_{n-1})} \e^{-\nu \nabla f(\hat{x}_{n-1}) \cdot  \Delta \hat{x}_n}}{m_{n-1} + \e^{-\nu f(\hat{x}_{n-1})} } \frac{m_n}{m_{n+1}} 
\approx F_n \frac{m_n}{m_{n+1}} (1 +  \bar{F} \nu^2 \nabla f^T \Sigma \nabla f),
\end{multline*}
where we wrote the 1st order term of the Taylor expansion of the exponential to reveal that step size tends to increase due to the gradient--descent property of the algorithm, counterbalancing the decreases as the previous weights add up. It could be interesting, in further work,  to investigate  possible connections between the probabilistic barycenter update and quasi--Newton or conjugate descent methods.

\paragraph{We have recovered a type of steepest--descent search using direct methods,\nopunct \!}
without explicitly computing derivatives or even using an expression for the function itself.
The descent  properties of the randomized barycenter method suggest that a convergence analysis along the usual lines for gradient--descent like algorithms could be endeavored, taking into account the stochastic nature of the descent. This would of course require assumptions about the properties of the specific function being optimized.
We shall not proceed along this path here. 
The analysis presented is  of a descriptive nature, showing what one might expect from the method without detailed consideration of a search strategy, rather than a prescription for a particular strategy. 

In fact we envision the barycenter method as a useful way of combining different strategies whose qualities are applicable to diverse circumstances. Some points in the sequence $x_i$ could be chosen according to coordinate--search methods, and others using simplicial or line--search ones. Alternatively, the sequence of curiosities $z_i$ could have a multimodal  distribution or be heteroscedastic, picking some points near the current barycenter with the goal of improving search precision, while perhaps a smaller fraction would have a high--variance distribution, in order to explore the possibility of distant minima smaller than the local one. 
The goals should be balanced taking into account the requirements of each problem. When the main cost of the procedure is computation of the goal function, then perhaps a fair amount of exploration may be justified by the possibility of detecting deeper minima. But in real--time design applications it may be undesirable to test points with high cost, and exploration must be used more sparingly.

\section{A stationary phase algorithm}
\label{sec:complex}

We now develop a version of the barycenter method employing complex weights.
\footnote{
We will embark on 
a digression that will  only become relevant if we try to apply the method to manifolds $\X$ whose geometry is other than Euclidian, and which can be comfortably ignored at 1st reading.
To use  complex exponents we need to identify the points $x$ not by coordinates, but by quantities that have an immediate metrical meaning. Define the positive quantity $x^\alpha$ as the distance between point $x$ and the hyperplane where the $\alpha$th coordinate is zero. Although the set of positive quantities $\{x^\alpha \}, \alpha = 1, \ldots, n_x$ is \emph{not} a vector field and does not correspond globally to a coordinate representation on the space $\mathcal X$ in any geometrically meaningful way --- in particular it does not transform correctly under changes of coordinates, each number $x^\alpha$ always remaining positive --- its numerical values coincide with the numerical values of the coordinate components of the vector $x$ in the positive orthant, which may therefore be used as local coordinates. Therefore we will take the liberty of using the habitual coordinate notation in some calculations involving the quantities $x^\alpha$.}
The \emph{complex barycenter} is defined  term--by--term by the same formula as the barycenter in \eqref{eq:barycenter}, with a complex exponent $\nu$:
 \begin{equation}
\label{eq:barycenter-i}
{\eta}^\alpha_n = \frac{\sum_{i=1}^n x^\alpha_i \e^{-\nu f(x_i)}} {\sum_{i=1}^n \e^{-\nu f(x_i)}},
\end{equation}
but now our estimate of the extremum point is
 \begin{equation}
\label{eq:barycenter-modulus}
\hat{x}^\alpha_n = | \eta_n^\alpha |.
\end{equation}
In these formulas all $x_i \geq 0$.
\footnote{The operations above are performed term--by--term, because there is no meaningful way of obtaining a coordinate representation of a point from the calculation of the modulus $| \eta | = \sqrt{\eta \bar{\eta}}$ of a complex number $\eta $. Although not a bona fide operation on vector spaces, it could be given the tensor--style expression
\begin{equation*}
\label{eq:barycenter-modulus-symbol}
\hat{x}^\alpha_i = \sqrt{\eta_i^\beta \bar{\eta}_i^\gamma \delta_{\beta \gamma}^\alpha}
\end{equation*}
using the symbol $\delta_{\beta \gamma}^\alpha = 1$ if $\alpha = \beta = \gamma$, and zero otherwise. Having said that, from now on we drop the superscripts $\alpha$ denoting coordinates unless needed.}
In the real  case the result of \eqref{eq:barycenter-i} together with \eqref{eq:barycenter-modulus} coincides with that of \eqref{eq:barycenter}.

The algorithm is suggested by Feynman's interpretation of quantum electrodynamics \cite{feynman-qed,feynman2012quantum} and by the stationary phase approximation \cite{bleistein1975asymptotic,hormander2012analysis} used in the  asymptotic analysis of integrals. 
We are interested in the situation in which $\nu$ has a real as well as an imaginary part, and, in contrast with the asymptotic expansions literature and with quantum mechanics, the imaginary part is bounded and, although perhaps large by design, not overwhelmingly so.

To understand the advantages of using a complex exponential, split the set of queries $x_i$, $i  \in \{1,2,\ldots,n\}$ into disjoint subsets $A$ and $B$, whose elements are respectively $x^A_i$, $i  \in \{1,2,\ldots,n^A\}$ and $x^B_j$, $j \in \{1,2,\ldots,n^B\}$, with $n^A + n^B = n$. Denote
${m}^A = {\sum_{i=1}^{n^A} \e^{-\nu f(x^A_i)}}$, 
${m}^B = {\sum_{j=1}^{n^B} \e^{-\nu f(x^B_j)}}$, 
${\eta}^A = {\sum_{i=1}^{n^A} x^A_i \e^{-\nu f(x^A_i)}}/m_A$, and 
${\eta}^B = {\sum_{j=1}^{n^B} x^B_j \e^{-\nu f(x^B_j)}}/m_B$. Then 
%
\begin{equation}
\label{eq:barycenter-splitAB}
{\eta}_n = \frac{\sum_{i=1}^{n^A} x^A_i \e^{-\nu f(x^A_i)}+\sum_{j=1}^{n^B} x^B_j \e^{-\nu f(x^B_j)}}
 {\sum_{i=1}^{n^A} \e^{-\nu f(xA_i)}+\sum_{j=1}^{n^B} \e^{-\nu f(x^B_j)}} 
 = \frac{m^A \eta^A +  m^B \eta^B}{m_A + m_B}.
\end{equation}
The relative weight of the measurements allocated to subset $A$ is thus determined by the sum $m^A$. The subsets $A$ and $B$ need not be organized into any temporal or other ordering; the formula is valid irrespectively of how the points $\{x_i \}$ are assigned to $A$ and $B$, and can easily be generalized to more subsets as  in \Cref{sec:paralipomena}
. 

Now suppose that all points $x_i^A$ are concentrated in a small region around a point $x^A$, so that a Taylor expansion up to the linear terms only can be used and
$f(x_i^A) \approx f(x^A) + \nabla f (x^A) \cdot y$, with $y$ in a box such that each of its coordinate components $y^\alpha \in [-\delta , \delta ], \alpha \in {1,2,\ldots,n_x}$. 
When $\nu$ is real, by convexity $\eta^A=\hat{x}^A$ will be contained inside the said region, and repeated tests would
have the effect of increasing the combined weight of the points and pulling the barycenter towards the region. If the region does not contain a candidate optimizer, this effect is unwelcome. Perhaps it can be mitigated by judicious choice of the test sequence, but often one does not have complete flexibility in specifying oracle queries, and in any case the derivatives of $f(\cdot)$ being unknown it is not possible to ascertain a priori whether a certain region contains candidate extremizers. The complex version of the barycenter method presents desirable properties in this case.

Perhaps the most transparent way to analyze this situation is to assume that the measurements are equally distributed inside the box with sides $2 \delta $. This is a reasonable practical assumption, as  there is not much reason to query the oracle more often at one point of a small  region than another.
Under these circumstances the expected value of the combined weight of the measurements $x^A$ can be approximated up to the linear term of a Taylor series
\begin{multline}
\label{eq:interference-product}
E\left [ \sum_{i=1}^{n^A} \e^{- \nu f(x_i^A)} \right ] \approx 
n^A E\left [ \e^{- \nu \left ( f(x^A)+ f'_\alpha (x^A) y^\alpha \right )} \right ]  \\
= n^A \int_{y^\alpha \in [-\delta_\alpha , \delta_\alpha ]} \e^{- \nu f(x^A)} \; \e^{-\nu f'_1 y^1}\frac{dy^1}{2 \delta_1 } \wedge \cdots \wedge \e^{-\nu f'_{n_x} y^{n_x}}\frac{dy^{n_x}}{2 \delta_{n_x} }\\
=  n^A \e^{- \nu f(x^A)} \int_{-\delta_1 }^{\delta_1 } \e^{-\nu f'_1 y^1}\frac{dy^1}{2 \delta_1 } \cdots  \int_{-\delta_{n_x} }^{\delta_{n_x} } \e^{-\nu f'_{n_x} y^{n_x}}\frac{dy^{n_x}}{2 \delta_{n_x} }.
\end{multline}
Each of the $n_x$ integrals in the product above is of the form
\begin{equation*}
\int_{-\delta }^{\delta } \e^{-\nu f'_\alpha y^\alpha} \frac{\d y^i\alpha}{2 \delta_\alpha } 
= \frac{1}{-\nu f' \delta } \frac{\e^{-\nu f'_i \delta} - \e^{\nu f'_ i \delta} }{2} 
= \frac{\sinh (\nu f' \delta )}{\nu f' \delta }.
\end{equation*}
Writing $r= \Re[\nu f' \delta ]$, $q= \Im[\nu f' \delta ]$ compute
\begin{multline*}
\sinh (\nu f' \delta ) 
 = \frac{\e^r (\cos q + j \sin q) - \e^{-r} (\cos q - j \sin q) )}{2} \\
 = \frac{(\e^r-\e^{-r}) \cos q + j (e^{r} + \e^{-r}) \sin q}{2} 
 = \sinh r \cos q + j \cosh r \sin q,
 \end{multline*}
 so that 
 \[
 \left | \frac{\sinh (\nu f' \delta )}{\nu f' \delta } \right |^2 = \frac{\sinh^2 r + \sin^2 q}{r^2 + q^2 }.
 \]
%
If we consider boxes of sides $\delta$ such that $\Im[\nu f' \delta ] = k \pi$, with $k = 1,2,\ldots$, the term $\sin^2 q = 0$. Then each of the factors in \eqref{eq:interference-product} is a complex number with magnitude $\sinh r / \sqrt{r^2 + q^2}$. Thus the expected value of the total weight of the measurements $x^A$ in a small box such that $\nabla f$ is \textsl{not} close to zero is reduced by a factor $(1 + q^2/r^2)^{n_x/2}$. It seems reasonable to choose the imaginary part of $\nu$ larger than its real part, though not so large as to cause problems with noisy measurements. We summarize these findings as follows.

\begin{theorem} \label{thm:complex-interference}
The expected contribution of measurements made outside of any region where $\nabla f \approx 0$ is discounted by one factor, proportional to $\nabla f$ and to the ratio between the complex magnitude of $\nu$ and its real part, for each dimension of the search space.
\end{theorem}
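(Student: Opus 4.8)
The assertion is a qualitative summary of the computation carried out immediately above, so the plan is simply to assemble that computation and read off the factor. First I would start from \eqref{eq:interference-product}: under the standing assumptions that the queries $x^A_i$ are distributed uniformly in the box of half--widths $\delta_\alpha$ centred at $x^A$ and that the linear Taylor truncation $f(x^A_i)\approx f(x^A)+f'_\alpha(x^A)\,y^\alpha$ is accurate there, the expected combined weight of the cluster factorizes as $n^A\e^{-\nu f(x^A)}$ times a product over the $n_x$ coordinate directions of the one--dimensional means $\int_{-\delta_\alpha}^{\delta_\alpha}\e^{-\nu f'_\alpha y}\,\d y/(2\delta_\alpha)$, and each such integral is elementary and equals $\sinh(\nu f'_\alpha\delta_\alpha)/(\nu f'_\alpha\delta_\alpha)$.

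Next I would write $\nu f'_\alpha\delta_\alpha = r_\alpha + j q_\alpha$ with $r_\alpha = \Re[\nu f'_\alpha\delta_\alpha]$ and $q_\alpha = \Im[\nu f'_\alpha\delta_\alpha]$, and use the identity $\sinh(r_\alpha + jq_\alpha) = \sinh r_\alpha\cos q_\alpha + j\cosh r_\alpha\sin q_\alpha$ to get
\[
\left|\frac{\sinh(\nu f'_\alpha\delta_\alpha)}{\nu f'_\alpha\delta_\alpha}\right|^2 = \frac{\sinh^2 r_\alpha + \sin^2 q_\alpha}{r_\alpha^2 + q_\alpha^2}.
\]
The baseline against which the word ``discounted'' is to be read is, on the one hand, a region where $\nabla f\approx 0$, in which each factor tends to $1$, and, on the other hand, the real--exponent barycenter, in which each factor equals $\sinh|r_\alpha|/|r_\alpha|\ge 1$ --- the amplification responsible for the ``pulling towards the cluster'' effect discussed before the theorem. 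I would then choose the box so that $q_\alpha = \Im[\nu f'_\alpha\delta_\alpha]$ is a nonzero integer multiple of $\pi$, killing $\sin^2 q_\alpha$; the $\alpha$th factor becomes
\[
\frac{\sinh|r_\alpha|}{\sqrt{r_\alpha^2 + q_\alpha^2}} = \frac{\sinh|r_\alpha|}{|r_\alpha|}\left(1 + \frac{q_\alpha^2}{r_\alpha^2}\right)^{-1/2},
\]
that is, the real--exponent value divided by $(1+q_\alpha^2/r_\alpha^2)^{1/2}$. Because $q_\alpha/r_\alpha = \Im[\nu f'_\alpha\delta_\alpha]/\Re[\nu f'_\alpha\delta_\alpha]$, which for real $f'_\alpha$, $\delta_\alpha$ and $\Re\nu>0$ is simply $\Im\nu/\Re\nu$, this per--direction suppression equals $|\nu|/\Re\nu$, the ratio of the complex magnitude of $\nu$ to its real part; multiplying the $n_x$ independent coordinate factors yields the total discount $(1+q^2/r^2)^{n_x/2} = (|\nu|/\Re\nu)^{n_x}$, one factor for each dimension, which is the assertion.

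The step I expect to be the main obstacle is not any single calculation but fixing the comparison precisely: ``discounted'' needs a baseline, and one must simultaneously keep the numerator $\sinh|r_\alpha|$ from growing too fast, since for large $|r_\alpha|$ it overwhelms the suppression. This is exactly why one works in the regime flagged before the theorem, with $\Im\nu$ kept larger than $\Re\nu$, so that each $r_\alpha = k\pi\,\Re\nu/\Im\nu$ stays small and $\sinh r_\alpha\approx r_\alpha$, whence the $\alpha$th factor is close to $(1+q_\alpha^2/r_\alpha^2)^{-1/2}<1$ outright. Two loose ends should then be checked: the box realizing $q_\alpha = k\pi$ has half--width $\delta_\alpha\sim k\pi/(\Im\nu\,f'_\alpha)$, inversely calibrated to $\nabla f$ --- the sense in which the construction, and hence the discount, is tied to $\nabla f$ --- and this $\delta_\alpha$ must still be small enough that the linear truncation used in \eqref{eq:interference-product} is legitimate, a joint condition relating $\nu$, the box, and the local curvature $\nabla^2 f$. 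If instead one retains a fixed small box without the tuning, a short expansion gives $|\,\cdot\,|^2 \approx 1 + (r_\alpha^2 - q_\alpha^2)/3$, so the deviation from unity is genuinely of order $\|\nabla f\|^2$, which is the sense in which the factor may be said to be ``proportional to $\nabla f$'' at leading order.
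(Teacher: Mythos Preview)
Your proposal is correct and follows essentially the same computation as the paper: the factorization \eqref{eq:interference-product}, the evaluation of each one--dimensional integral as $\sinh(\nu f'\delta)/(\nu f'\delta)$, the modulus formula $(\sinh^2 r + \sin^2 q)/(r^2+q^2)$, and the choice $q=k\pi$ to obtain the discount $(1+q^2/r^2)^{n_x/2}$ are all exactly the paper's steps. Your explicit identification $q/r=\Im\nu/\Re\nu$, your discussion of the baseline against which ``discounted'' is measured, and your small--box expansion $1+(r^2-q^2)/3$ are useful clarifications that go beyond what the paper spells out, but they do not constitute a different route.
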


 This destructive interference, so to speak, between repeated measurements near points which are \textsl{not} candidates for minimizers is the justification for employing complex values of $\nu$.
 

For the purpose of computing $\eta_n$, the sequential ordering of the points $\{x^A_i\}$ and $\{x^B_i\}$ is immaterial. However if we were to compute $\hat{x}_i$ at a certain instant $i$ and  use its value to determine the choice of the subsequent oracle queries, along the lines suggested in the recursive algorithm analyzed in \Cref{sec:recursive}, then 
property \eqref{eq:barycenter-splitAB} would lose applicability, and the ordering of the query point $x_i$ would need to be taken into consideration. 
With some poetic license, we might say that the information responsible for the interference phenomenon, which largely cancels out the results of unproductive tests, is contained in the phase of $\eta$, and would be lost in the moment that the magnitude of $\eta$ is computed.
These considerations do not apply in the real $\nu$ case, in which $\eta_n$ and $\hat{x}_n$ coincide (for positive values of $\hat{x}_n$).

\section{Noisy measurements and non--smooth functions}
\label{sec:noisy}

Oftentimes each measurement of the function $f$ at point $x_i$ is corrupted by noise or experimental errors. In this case we still would like to minimize $f$, but now using oracle answers $f(x_i) + w_i$. For the purpose of analyzing the effect of noise on the results of the barycenter method, we consider the sequence $x_i$ as given and $w_i$ as an ergodic random process. A more elaborate analysis of the effect of noise on the sequence $\{x_i\}$ itself, which would depend on the recursive search algorithm used to generate the oracle queries, will not be attempted in this paper.

With $x_i$ deterministic and $w_i$ random, we wish to compute statistics of 
\begin{equation}
\label{eq:barycenter-noise}
{\eta}_n =  \frac{\sum_{i=1}^n x_i \e^{-\nu (f(x_i)+ w_i)}} {\sum_{i=1}^n \e^{-\nu (f(x_i)+w_i)}}.
\end{equation}
%
%
%
If $w_i$ are   independent, normal, zero--mean variables with standard deviation $\sigma$, 
then  $E[e^{-\nu w_i}] = \e^{\nu^2 \sigma^2/2} := \bar{w}$ and $\var[\e^{-\nu w_i}] = \e^{2\nu^2 \sigma^2} - \e^{\nu^2 \sigma^2} = \bar{w}^4 - \bar{w}^2$. 
Define the nominal or ``noise--free'' values 
$\bar{m} =  \sum_{i=1}^n \e^{-\nu f(x_i)}$ and 
$\bar{\eta} = { \sum_{i=1}^n x_i \e^{-\nu f(x_i)}}/ \bar{m}$, and also the scalar quantity $\bar{\bar{m}} = \sum_{i=1}^n \e^{-2\nu f(x_i)}$, the vector quantity $\bar{\bar{\eta}} = \sum_{i=1}^n x_i \e^{-2\nu f(x_i)}/ \bar{\bar{m}}$, and the matrix quantity $\breve{\eta} = \sum_{i=1}^n x_i x_i^T \e^{-2\nu f(x_i)} / \bar{\bar{m}}$.
\begin{theorem} \label{thm:under-noise}  Assuming that $\sigma$ is small, under the circumstances above the mean and variance of $\eta$ can be expressed approximately as follows:
\begin{align}
\label{eq:mean-eta}
E[\eta]  & \approx  \bar{\eta}  +
\frac{ \bar{\bar{m}} }{\bar{m}^2 } (\bar{\eta} - \bar{\bar{\eta}})  \nu^2 \sigma^2 
\text{ and } \\
\label{eq:var-eta}
\var[\eta] & \approx 
\frac{ \bar{\bar{m}} }{\bar{m}^2 }(\bar{\eta} \bar{\eta}^T   -  \bar{\eta} \bar{\bar{\eta}} - \bar{\bar{\eta}} \bar{\eta} + \breve{\eta})
 \nu^2 \sigma^2. 
\end{align}
\end{theorem}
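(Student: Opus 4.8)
The plan is to treat $\eta_n$ as a smooth function of the perturbations $\e^{-\nu w_i}$ and expand to second order in $\sigma$. Write $u_i = \e^{-\nu w_i}$, so that $E[u_i] = \bar w$ and, using $\bar w^2 = \e^{\nu^2\sigma^2}$, one has $E[u_i] = 1 + \tfrac12\nu^2\sigma^2 + O(\sigma^4)$ while $\var[u_i] = \bar w^4 - \bar w^2 = \nu^2\sigma^2 + O(\sigma^4)$. Then $\eta_n = \frac{\sum_i x_i \e^{-\nu f(x_i)} u_i}{\sum_i \e^{-\nu f(x_i)} u_i}$. Set $a_i = \e^{-\nu f(x_i)}$, numerator $N = \sum_i x_i a_i u_i$, denominator $D = \sum_i a_i u_i$, so $\eta_n = N/D$. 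The nominal point is $u_i \equiv \bar w$ for all $i$, which gives exactly $\bar\eta$; deviations are governed by $\delta u_i := u_i - \bar w$, which are independent, zero-mean, with $\var[\delta u_i] = \nu^2\sigma^2 + O(\sigma^4)$.

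First I would compute the first-order term: $\eta_n - \bar\eta \approx \frac{1}{\bar m \bar w}\sum_i (x_i - \bar\eta)a_i \,\delta u_i + (\text{second order})$, obtained from $\partial_{u_j}(N/D) = \frac{a_j(x_j - N/D)}{D}$ evaluated at the nominal point. Taking expectations kills the linear term. For the mean to $O(\sigma^2)$ I would retain the second-order Taylor terms: the Hessian of $N/D$ in the $u_j$'s contracted against $E[\delta u_i \delta u_j] = \delta_{ij}(\nu^2\sigma^2)$, plus the contribution of the nonzero mean shift $E[\delta u_i]$ is already absorbed since $E[u_i]=\bar w$ exactly. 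The diagonal second derivative $\partial_{u_j}^2(N/D)$ at the nominal point works out to $-\frac{2 a_j^2(x_j - \bar\eta)}{(\bar m\bar w)^2}$, so $E[\eta_n] - \bar\eta \approx -\frac{\nu^2\sigma^2}{(\bar m\bar w)^2}\sum_j a_j^2 (x_j - \bar\eta)$. Now $\sum_j a_j^2 = \bar w^{?}\cdot$—more precisely $\sum_j \e^{-2\nu f(x_j)} = \bar{\bar m}$ and $\sum_j x_j \e^{-2\nu f(x_j)} = \bar{\bar m}\,\bar{\bar\eta}$, and dividing by $\bar w^2 = \e^{\nu^2\sigma^2} = 1 + O(\sigma^2)$ costs only higher order, so $E[\eta_n] - \bar\eta \approx \frac{\nu^2\sigma^2 \bar{\bar m}}{\bar m^2}(\bar\eta - \bar{\bar\eta})$, which is \eqref{eq:mean-eta}. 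For the variance, to leading order $O(\sigma^2)$ only the first-order expansion matters: $\var[\eta_n] \approx \frac{1}{(\bar m\bar w)^2}\sum_i (x_i - \bar\eta)(x_i-\bar\eta)^T a_i^2 \var[\delta u_i] = \frac{\nu^2\sigma^2}{\bar m^2}\sum_i (x_i-\bar\eta)(x_i-\bar\eta)^T \e^{-2\nu f(x_i)}$; expanding $(x_i-\bar\eta)(x_i-\bar\eta)^T = x_ix_i^T - x_i\bar\eta^T - \bar\eta x_i^T + \bar\eta\bar\eta^T$ and summing against $\e^{-2\nu f(x_i)}$ gives $\bar{\bar m}(\breve\eta - \bar{\bar\eta}\bar\eta^T - \bar\eta\bar{\bar\eta}^T + \bar\eta\bar\eta^T)$, which is \eqref{eq:var-eta} (up to the transpose placement, with $\bar\eta$ scalar-per-coordinate in the paper's termwise convention).

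The main obstacle, and the place where care is genuinely needed, is bookkeeping the orders of $\sigma$ consistently: the factors $\bar w = \e^{\nu^2\sigma^2/2}$ differ from $1$ at order $\sigma^2$, so one must decide which appearances of $\bar w$ contribute to the stated $O(\sigma^2)$ answer and which are absorbed into the error term. In the variance this is harmless (the prefactor $1/\bar w^2$ multiplies a quantity already of order $\sigma^2$), but in the mean the second-order Taylor coefficient and the $\bar w$-normalization both live at the same order, so I would be deliberate about writing $u_i = \bar w + \delta u_i$ with $E[\delta u_i]=0$ exactly, rather than expanding $u_i$ around $1$. A secondary subtlety is justifying that the Taylor remainder is genuinely $o(\sigma^2)$: since $N/D$ is smooth in a neighborhood of the nominal point (the denominator $D$ is bounded away from zero for $\sigma$ small, as $D \to \bar m\bar w > 0$ in probability and one can restrict to a high-probability event, the $u_i$ having light tails), the remainder involves third derivatives times $E[|\delta u_i|^3] = O(\sigma^3)$, which is the kind of estimate I would invoke without grinding through it. I would relegate the detailed third-derivative bound, exactly as the paper does for Theorem~\ref{thm:variance}, to \Cref{sec:tech}.
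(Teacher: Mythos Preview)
Your proposal is correct and follows essentially the same approach as the paper: a second-order Taylor expansion of the ratio $N/D$ in the perturbations $u_i - \bar w$, using independence so that only diagonal second moments contribute. The only cosmetic difference is that the paper abstracts the Taylor step into a standalone lemma for $E[b^Tv/a^Tv]$ and $\var[b^Tv/a^Tv]$ with a general covariance $\var[v]$ and then specializes to $\var[v]=(\bar w^4-\bar w^2)I$, whereas you compute the derivatives of $N/D$ directly and use the diagonal structure from the outset; the algebra and the order-tracking in $\sigma$ are the same.
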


The proof is given in \Cref{sec:tech}. What we learn from somewhat involved formulas~\eqref{eq:mean-eta}
 and \eqref{eq:var-eta} is that noise or measurement errors generate a bias and a variance in the barycenter, both proportional in 1st approximation to the variance of the noise. These  effects conspire to pull the barycenter away from the looked--after minimum of the function. However one may derive a measure of comfort in that the unwelcome errors tend to zero as the noise becomes smaller. 
Theorem~\ref{thm:under-noise} indicates that there is little reason to fear that the method breaks down under moderate measurement or computing errors.

The situation where the (unknown) function to be optimized is not smooth can be studied as a particular case of the optimization of $f(x) + w$, where $f(\cdot)$ itself is smooth, and $w(x)$ is the difference between the function under consideration and its smooth approximation, plus a noise or error parcel when applicable. In this case the assumption that $w_i$ is uncorrelated with $x_i$ can be objected to. On the other hand any function can be approximated with arbitrary precision by a smooth function. The analysis presented is  reasonable   in the practical case when   
 the approximation error is overshadowed by  measurement or  numerical errors.  

\section{Paralipomena}
\label{sec:paralipomena}
  

Formula~\eqref{eq:barycenter} is suitable for direct, or derivative--free, optimization: it is applicable when no functional expression for $f(\cdot)$ is  known, and the search for an optimizer makes use of a zero--order oracle \cite{nesterov2004introductory} which, when queried, supplies the value of the goal function at a given point, but not of its derivatives. In practical applications the oracle query may take the form of an experiment or simulation. To wrap up this paper we connect to various ideas in the literature: Laplace's method; asymmetry, which is a drawback of the method; forgetting factors; parallelization; and partial evaluation of goal functions. Then we mention applications to feedback control and dynamical system identification and offer some conclusions.

\paragraph{Laplace's method\nopunct \!}
for asymptotic analysis uses an integral equation that can be seen as a limiting case of the  barycenter method, in the limit when $\nu \rightarrow \infty$ and is real. In contrast, we are strictly interested in computable algorithms, and $\nu$ takes finite values; in fact we need to keep $\nu$ limited to stay clear of numerical troubles. For the same reason we employ a sum of a finite number of test points, which can be contrasted with Laplace's integral formula, itself more similar to the expectations appearing in the analyses in \Cref{sec:gradient} and \Cref{sec:complex}. Formula \eqref{eq:barycenter} is also related to the expression for the LogSumExp (LSE) function which is used in machine learning, and can perhaps be seen as a  way of deploying the Boltzmann distributions of statistical mechanics in an optimization procedure.

\paragraph{If the goal function is not symmetric\nopunct \!}
around the extremum, the barycenter will be pulled towards an average of the test points, which may not coincide with the actual extremum. It would be negligent not to mention this drawback of the barycenter method. In a sufficiently small neighborhood of a twice continuously differentiable function a quadratic approximation can be used, and the issue of asymmetry disappears. Higher values of $\nu$ will also tend to make the resulting bias smaller.

\paragraph{A forgetting factor\nopunct \!}
can be employed
if convergence of the step size is not desired, 
and more recent tests are to be given a bigger weight than older ones. Now in place of formulas \eqref{eq:mass_update} and \eqref{eq:bary_update}  write
\begin{equation}
\label{eq:forgetting_update}
m_n  = \lambda_n m_{n-1} + \e^{-\nu_n f(x_n)}, \qquad
\hat{x}_n  = \frac{1}{m_n} \left( \lambda_n m_{n-1} \hat{x}_{n-1} + \e^{-\nu_n f(x_n)} x_n \right).
\end{equation}
The forgetting factor $\lambda_n \leq 1$, whose dependence on $n$ allows for the case when the change happens in some instants but not all, helps the search focus closer and closer, and also to filter out noise in accumulated past measurements.
If $\lambda$ is constant then batch formula \eqref{eq:barycenter} 
needs to be replaced by
\begin{equation}
\label{eq:barycenter-forget}
\hat{x}_n = \frac{\sum_{i=1}^n x_i \lambda^{n-i} \e^{- \nu f(x_i)}}{\sum_{i=1}^n \lambda^{n-i}\e^{- \nu f(x_i)}}.
\end{equation}

If in a certain application it becomes necessary to modify step size, then another reasonable course of action is to increase $\nu$ at some, or all, instants $n$. Coherent with this strategy would be to correct $m_{n-1}$ by a factor 
$\e^{(\nu_{n}- \nu_{n-1})f(\hat{x}_{n-1})}$, to reflect the sharper weighting on latter points. No recursive formula such as \eqref{eq:forgetting_update} can be used, because the current barycenter with a given $\nu$ does not contain all the information necessary  to compute where the barycenter would have been if a different $\nu$ had been employed.

\paragraph{The barycenter method is parallelizable\nopunct \!}
 in a natural way. As equation~\eqref{eq:barycenter-splitAB} and its obvious extension 
\begin{equation} \label{eq:parallel}
{\eta} = \frac{\sum_{\ell=1}^{n_\ell} m^{A_\ell} \eta^{A_\ell}} {\sum_{\ell=1}^{n_\ell} m^{A_\ell} }
 \end{equation}
show, the search procedure may be run as a set of independent  simultaneous searches.

 The processes responsible for computing the partial barycenters of the sets $A_\ell$ may be several instances of the same algorithm, for example the type of randomized searches discussed in \Cref{sec:gradient} with different initializations or with different statistics. Such a heteroscedastic, multi--modal search can be used to probe distant corners of the search space $\X$, trying to avoid getting stuck at local minima.

While randomized local searches may be among the simplest to implement and analyze, diverse search mechanisms with complementary desirable properties and drawbacks can be incorporated and combined using \eqref{eq:parallel}. The barycenter method is not in conflict with other algorithms previously described in the literature.
Parallelization is  an effective way to take advantage of intrinsically parallel computer architectures, because the various sequences need to be combined only at the end of the search process, or perhaps at a limited number of intermediary steps. Parallelization is perhaps a compelling argument for considering the complex version of the barycenter method.

\paragraph{Partial evaluation\nopunct \!}
of the goal function is a situation that can arise if
to each query at $x_i$ the oracle returns a value $f_i(x_i)$. This situation is of practical relevance if, for example, the optimization goal can be expressed as 
\begin{equation*}
\label{eq:partial-goal-sum}
f(x) = \sum_{j=1}^k f_k(x),
\end{equation*}
with $k$ a large number (although typically $k \ll n$, the total number of oracle queries performed), which makes it impractical to evaluate the full $f(x)$ at each step. In this case our method is reminiscent of so--called stochastic gradient algorithms, although without the need for computing derivatives. 
The complex version of the barycenter method seems particularly appropriate under such circumstances. Because full information about $f(x)$ does not become available with each measurement, several oracle queries at the same or nearby points may become necessary. In hose that happen in regions of high gradient of $f(\cdot) $ are discounted as explained in \Cref{sec:complex}. 

Under partial evaluations an occasional measurement of a small value for the goal function could mislead the search procedure.
Although one should be careful not to discount the challenge of adjusting the free parameters of the  search procedure, the barycenter method seems more easily applicable in such circumstances then other derivative--free methods. It is  not readily apparent how partial--gradient methods might be used if only 0--order information on the goal function may be used.

The method developed might also be applied to situations where the goal function is partially known and can be expressed as $f(x,g(x))$, with either $f(\cdot,\cdot)$ or $g(\cdot)$ of a known form. Partial derivatives can be computed as allowed by functional knowledge, and used to guide the choice of the sequence $x_i$ according to well--established optimization methods. Those in turn can be combined with an exploration term using the barycenter method.

\paragraph{Dynamical system identification and direct adaptive control\nopunct \!}
are applications of interest  to the author where partial evaluation comes about. In system identification, one wishes to minimize an error $| \hat\theta - \theta |$ using only available measurements $|\phi(t)( \hat\theta - \theta )|$, with $\phi(t)$ a regressor that depends on the state of the dynamical system to be modeled. Often $\phi(t)$ cannot be chosen directly and the system parameter estimate $\hat\theta$ has to be derived using whatever data becomes available through observation of the process under consideration. For linear model fitting usually $\phi(t)( \hat\theta - \theta )$ is known, and gradient--type or least--squares algorithms are applicable; however there exist problems of interest such as direct adaptive control where only the error's magnitude $|\phi(t)( \hat\theta - \theta )|$ is available for the purpose of tuning $\hat\theta(t)$ \cite{design-direct}; see also \cite{miller-lyapunov}.

When optimization is  performed offline, the most important resource to conserve are time--consuming oracle queries themselves. If however optimization is used to make real--time decisions, such as those involved in the tuning of a feedback controller, then  the cost of all  queries is not equal. The costly ones are those which result in high values of the goal functions, and accordingly a more cautious type of search may be preferable.

\begin{center} * \hspace{2em} * \hspace{2em} * \end{center}

\paragraph{The goal of traditional model--based optimization\nopunct \!}
is to obtain the most precise results using the least amount of computational power. In direct optimization, the most important resource, to be used sparingly, are queries that evaluate the cost to be optimized. The barycenter method appears, from the analysis developed here, to provide a useful compromise between these goals. The computational requirements are relatively modest, as are the data requirements, and the algorithm is flexible enough to incorporate improvements if more data is available, or if there exists previous information concerning the structure of the function to be optimized.

If perhaps not completely obvious, the barycenter method has an a posteriori inevitability. The complex version in particular seems to have useful and elegant properties, which to the best of the author's knowledge have not yet been explored.

 {
 \appendix
\section{Some technical proofs}
\label{sec:tech}
\begin{proof}[Proof of Theorem~\ref{thm:variance}]
{We compute 
  the variance of } $\Delta\hat{x}_i $
 \[
 E[F^2 z^\gamma z^\alpha] -  E[F z^\gamma ] E[F  z^\alpha]
 \]
 in the easier case where $\bar{z} = 0$. Employing the expression for the derivative of $p$ as in the computation of $E[F z]$
 \[
 E[F^2 z^\gamma z^\alpha] =  \int_{\mathcal X} F^2 z^\gamma z^\alpha p \, \d z = 
 - \Sigma^{\alpha \beta} \int F^2 z^\gamma \frac{\partial p}{\partial z^\beta},
 \]
 where the notation is lightened by leaving dependence on $z$, the region of integration, and the integration form $\d z$ implicit whenever no ambiguity results. 
 Using the same integration by parts reasoning as in the proof of Theorem~\ref{thm:average},
\[
-  \int F^2 z^\gamma \frac{\partial p}{\partial z^\beta} 
=  \int  \frac{\partial }{\partial z^\beta} (F^2 z^\gamma) p 
=  \int F^2 \delta^\gamma_\beta p + \int 2 F \frac{\partial F }{\partial z^\beta}  z^\gamma p, 
\]
 where $\delta^\gamma_\beta$ is the Kronecker delta. Using the expression for the derivative of $p$   and the integration by parts formula again,  the 2nd term above in turn reads
 \[
  - 2\Sigma^{\gamma \delta}\int  F \frac{\partial F }{\partial z^\beta}  \frac{\partial p }{\partial z^\delta}
=  2\Sigma^{\gamma \delta}\int  \left(\frac{\partial  F}{\partial z^\delta}  \frac{\partial F }{\partial z^\beta} + F \frac{\partial^2 F }{\partial z^\delta \partial z^\beta}\right)p.
 \] 
Collecting the terms
 \begin{equation*}
 E[F^2 z^\gamma z^\alpha] =
  \Sigma^{\alpha \gamma} E[ F^2 ] + 2\Sigma^{\alpha \beta} \Sigma^{\gamma \delta} E\left[ \frac{\partial  F}{\partial z^\delta}  \frac{\partial F }{\partial z^\beta}
  + F  \frac{\partial^2 F }{\partial z^\delta \partial z^\beta} \right].
 \end{equation*}

 We are considering the more interesting case  when $\hat{x}$ is near a critical value of $f(\cdot)$, and $\partial F / \partial z$ is small. As
 \begin{equation*}
 E[F z^\gamma] E[F z^\alpha] =
   \Sigma^{\alpha \beta} \Sigma^{\gamma \delta} E\left[  \frac{\partial  F}{\partial z^\delta} \right] E\left[ \frac{\partial F }{\partial z^\beta}  \right],
 \end{equation*} 
 we write, ignoring the terms in $E[\partial F / \partial z]$,
 
 \begin{equation} \label{eq:variance-simplified}
  E[F^2 z z^T] -  E[F z ] E[F  z^T] \approx
  \Sigma E[F^2 ] +  2 \Sigma^T E\left[  F  \frac{\partial^2 F }{\partial z^T \partial z} \right] \Sigma.
 \end{equation}
Differentiating the equation
$
 \frac{\partial F}{\partial z^\beta} 
= -\nu   \bar{F}  \frac{\partial f}{\partial z^\beta}
$
gives
\[
 \frac{\partial^2 F}{\partial z^\alpha \partial z^\beta} 
= -\nu \bar{F} \frac{\partial^2 f}{\partial z^\alpha \partial z^\beta}
-\nu   \frac{\partial \bar{F}}{\partial z^\alpha}  \frac{\partial f}{\partial z^\beta};
\]
we can ignore the 2nd parcel which is small near a critical point, resulting in an expression for the variance
\[
\mathrm{Var} (\Delta \hat{x}) \approx 
 \Sigma E[F^2 ] - 2\nu \Sigma^T E\left[  F  \bar{F} \nabla^2 f \right] \Sigma
\] 
in terms of the Hessian $\nabla^2 f$ of $f$, which is valid approximately in a neighborhood of a critical point where $\nabla f= 0$.
\end{proof}

{A multivariable integration by parts formula}  used repeatedly in the analysis of our algorithms is derived here for completeness. 

\begin{lemma} \label{lemma:int-by-parts}
Let $\d x = \d x^1 \wedge \cdots \wedge \d x^n$ be the standard $n$-dimensional volume form in coordinates $\{x^1, \ldots x^n \}$ and let $\partial / \partial x^i$ be a coordinate vector field. With $u$ and $v$ functions defined on a manifold $\mathcal X$,
\[
\int_{\mathcal X} \left( \frac{\partial}{\partial x^i} \contract du \right) v \d x = 
\int_{\partial {\mathcal X}} uv \frac{\partial}{\partial x^i} \contract \d x    - 
\int_{\mathcal X} u \left( \frac{\partial}{\partial x^i} \contract dv \right)  \d x 
\]
\end{lemma}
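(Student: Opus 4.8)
The plan is to realize the identity as a single application of Stokes' theorem. Set $V = \partial/\partial x^i$ and consider the $(n-1)$-form $\omega = uv\,(V \contract \d x)$ on $\mathcal X$. Since $\frac{\partial}{\partial x^i}\contract du = \frac{\partial u}{\partial x^i}$ is just a function, the whole content of the lemma reduces to computing $d\omega$ pointwise and then writing $\int_{\mathcal X} d\omega = \int_{\partial\mathcal X}\omega$.

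First I would record that $V\contract \d x$ is closed. Because $\d x$ is a top form, $d(\d x)=0$, and Cartan's formula $\mathcal L_V = d\circ(V\contract\,\cdot\,) + (V\contract\,\cdot\,)\circ d$ gives $d(V\contract \d x) = \mathcal L_V \d x$; as $V$ is a coordinate vector field, $\mathrm{div}\,V = 0$, so $\mathcal L_V \d x = (\mathrm{div}\,V)\,\d x = 0$. (Equivalently one checks directly that $V\contract \d x = (-1)^{i-1}\,\d x^1\wedge\cdots\wedge\widehat{\d x^i}\wedge\cdots\wedge\d x^n$ is $d$-closed.) Hence by the Leibniz rule for the exterior derivative,
\[
d\omega = d(uv)\wedge(V\contract \d x) + uv\,d(V\contract \d x) = d(uv)\wedge(V\contract \d x).
\]

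Next I would simplify $d(uv)\wedge(V\contract \d x)$. The $(n+1)$-form $d(uv)\wedge\d x$ vanishes identically on the $n$-manifold $\mathcal X$, so applying the antiderivation identity $V\contract(\alpha\wedge\beta) = (V\contract\alpha)\wedge\beta + (-1)^{\deg\alpha}\,\alpha\wedge(V\contract\beta)$ with $\alpha = d(uv)$ yields
\[
0 = V\contract\bigl(d(uv)\wedge\d x\bigr) = \bigl(V\contract d(uv)\bigr)\,\d x - d(uv)\wedge(V\contract\d x),
\]
so that $d\omega = \bigl(V\contract d(uv)\bigr)\,\d x = \dfrac{\partial(uv)}{\partial x^i}\,\d x$. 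Expanding with the ordinary product rule, $\dfrac{\partial(uv)}{\partial x^i} = \bigl(\tfrac{\partial}{\partial x^i}\contract du\bigr)v + u\bigl(\tfrac{\partial}{\partial x^i}\contract dv\bigr)$. Finally I would invoke Stokes' theorem, $\int_{\mathcal X} d\omega = \int_{\partial\mathcal X}\omega$, and move the term containing $dv$ to the right-hand side to obtain the stated formula.

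The computation has no genuine obstacle; the only points that need care are the sign/degree bookkeeping in the interior-product antiderivation identity, and the standing hypotheses that make Stokes applicable --- $u,v\in C^1$, $\mathcal X$ oriented with piecewise-smooth boundary, and enough decay or compactness for the integrals to converge. In the applications in \Cref{sec:gradient} and \Cref{sec:noisy} one has $\mathcal X = \R^{n}$ with a single global chart (so ``coordinates on $\mathcal X$'' is unambiguous there), and the Gaussian density forces $\omega$ to vanish at infinity, which is exactly why the boundary term is subsequently dropped.
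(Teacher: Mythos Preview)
Your proof is correct and follows essentially the same route as the paper: you form the $(n-1)$-form $\omega = uv\,(\partial/\partial x^i\contract \d x)$, use that $\partial/\partial x^i\contract \d x$ is closed together with the antiderivation identity applied to the vanishing $(n+1)$-form $\d(uv)\wedge\d x$ to obtain $\d\omega = \bigl(\partial/\partial x^i\contract \d(uv)\bigr)\,\d x$, and then invoke Stokes and the Leibniz rule for $\d(uv)$. The only cosmetic difference is that you justify closedness of $\partial/\partial x^i\contract \d x$ via Cartan's formula, whereas the paper simply asserts it.
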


\begin{proof}
As $\d x$ is an  $n$-form and $\d (uv)$ is a 1-form,
\begin{equation*}
0 = \frac{\partial}{\partial x^i} \contract \left( \d (uv) \wedge \d x \right) 
= \left( \frac{\partial}{\partial x^i} \contract  \d (uv) \right) \wedge \d x  
- d(uv) \wedge \left( \frac{\partial}{\partial x^i} \contract  \d x \right) 
\end{equation*}
but 
\begin{equation*}
\d \left(\frac{\partial}{\partial x^i} \contract \left( uv \; \d x \right) \right) 
= \d \left( uv \frac{\partial}{\partial x^i} \contract \d x \right)  
= \d (uv) \wedge \left( \frac{\partial}{\partial x^i} \contract  \d x \right) 
+ uv \; \d \left( \frac{\partial}{\partial x^i} \contract  \d x \right) 
\end{equation*}
The last $n-1$-form is closed so
\[
\left( \frac{\partial}{\partial x^i} \contract \d (uv) \right) \wedge \d x
= \d \left(\frac{\partial}{\partial x^i} \contract \left( uv \; \d x \right) \right)
\]
Using Stokes' theorem gives
\begin{equation*}
\int_{\mathcal X} \left( \frac{\partial}{\partial x^i} \contract  \d (uv) \right) \wedge \d x
= \int_{\mathcal X} \d \left(\frac{\partial}{\partial x^i} \contract \left( uv \; \d x \right) \right) 
=  \int_{\partial {\mathcal X}} \frac{\partial}{\partial x^i} \contract \left( uv \; \d x \right)
\end{equation*}
and the lemma follows from $\d (uv) = \d u \; v + u \; \d v$.
\end{proof}
 }

\begin{proof}[Proof of Theorem~\ref{thm:under-noise}]
Setting $a_i = \e^{-\nu f(x_i)}$, $b^\alpha_i = x^\alpha_i \e^{-\nu f(x_i)}$,  $v^i = \e^{-\nu w_i}$, and $\bar{v}^i= \bar{w}$
(an $n$--vector all of whose elements are $\bar{w}$), 
first gather some preliminary calculations.
%
\begin{gather*}
\frac{b_\alpha^T \bar{v}}{ a^T \bar{v}} = 
\frac{\sum_{i=1}^n x^\alpha_i \e^{-\nu f(x_i)}\bar{w}}{ \sum_{i=1}^n \e^{-\nu f(x_i)}\bar{w}} 
= \bar{\eta}^\alpha; \qquad a^T \bar{v} =   \bar{m} \bar{w};\qquad
\var[v]= (\bar{w}^4 - \bar{w}^2) I;
%
\end{gather*}
We consider $b_\alpha$ to be a column vector,  write the $\alpha$ as a subscript for typographical convenience, and $f_i$ for $f(x_i)$ to lighten notation.
\begin{multline*} 
a^T (   a b_\alpha^T  - b_\alpha a^T )\bar{v} = a^T    a  b_\alpha^T \bar{v} - a^T b_\alpha a^T \bar{v}  \\
= \sum_{i=1}^n \e^{-2\nu f_i}
\sum_{j=1}^n x^\alpha_j \e^{-\nu f_j}\bar{w}
- \sum_{i=1}^n x^\alpha_i \e^{-2\nu f_i}
\sum_{j=1}^n \e^{-\nu f_j}\bar{w}
= \bar{m}\bar{\bar{m}}(\bar{\eta} - \bar{\bar{\eta}}) \bar{w};
\end{multline*}
\begin{multline*}
\bar{v}^T (   b_\alpha a^T - ab_\alpha^T) (a b_\beta^T-b_\beta a^T) \bar{v} 
= \bar{v}^T(b_\alpha a^T a b_\beta^T   -  b_\alpha a^T b_\beta a^T 
-a b_\alpha^T a b_\beta^T  + a b_\alpha^T  b_\beta a^T)\bar{v}  \\
= \sum_{i=1}^n x^\alpha_i \e^{-\nu f_i}\bar{w} 
\sum_{j=1}^n  \e^{-2\nu f_j} 
 \sum_{k=1}^n x_k^\beta \e^{-\nu f_k}\bar{w}  
-
 \sum_{i=1}^n x^\alpha_i \e^{-\nu f_i}\bar{w} 
 \sum_{j=1}^n x^\beta_j \e^{-2\nu f_j}\bar{w} 
\sum_{k=1}^n  \e^{-\nu f_k} \\
-
\sum_{i=1}^n  \e^{-\nu f_i}
 \sum_{j=1}^n x^\alpha_j \e^{-2\nu f_j}\bar{w} 
 \sum_{k=1}^n x^\beta_k \e^{-\nu f_k}\bar{w} 
+
\sum_{i=1}^n  \e^{-\nu f_i}
 \sum_{j=1}^n x^\alpha_j x^\beta_j \e^{-2\nu f_j}\bar{w} 
\sum_{k=1}^n  \e^{-\nu f_k} \\
= \bar{\bar{m}} \bar{m}^2 (\bar{\eta}^\alpha \bar{\eta}^\beta   -  \bar{\eta}^\alpha \bar{\bar{\eta}}^\beta
-   \bar{\bar{\eta}}^\alpha  \bar{\eta}^\beta + \breve{\eta}^{\alpha \beta}) \bar{w}^2.
\end{multline*}
Lemma~\ref{lemma:mean-and-var-n}  gives 
\begin{align*}
E[\eta^\alpha] - \bar{\eta}^\alpha & \approx 
\frac{\bar{w}^4 - \bar{w}^2}{(a^T\bar{v})^3} a^T(ab^T - ba^T) \bar{v} 
= (\bar{\eta}^\alpha - \bar{\bar{\eta}}^\alpha) \frac{ \bar{\bar{m}} }{\bar{m}^2 } (\bar{w}^2-1), \\
\var[\eta]^{\alpha \beta} & \approx 
\frac{\bar{w}^4 - \bar{w}^2}{(a^T\bar{v})^4} \bar{v}^T (   b_\alpha a^T - ab_\alpha^T) I (a b_\beta^T-b_\beta a^T) \bar{v}.
\end{align*}
The last expression equals 
$(\bar{\eta}^\alpha \bar{\eta}^\beta   -  \bar{\eta}^\alpha \bar{\bar{\eta}}^\beta
-   \bar{\bar{\eta}}^\alpha  \bar{\eta}^\beta + \breve{\eta}^{\alpha \beta})
 (\bar{w}^2 -1) { \bar{\bar{m}}  }/{\bar{m}^2 }$,
and the theorem follows from the Taylor expansion
$\bar{w}^2 -1 =  \e^{\nu^2 \sigma^2} -1 \approx \nu^2 \sigma^2 + \nu^4 \sigma^4/2 + \ldots$
\end{proof}

 \begin{lemma} \label{lemma:mean-and-var-n} With $v \in \R^n$ a random positive variable such that $E[v] = \bar{v}$ and $a, b_\alpha \in \R^n$ determined, approximate values for the mean and covariance of the quotients $b_\alpha^T v / a^T v$ using only the 1st and 2nd moments of $v$ are
\begin{align}
\label{eq:mean-bvav}
E\left [ \frac{b_\alpha^T v}{a^T v} \right] & \approx  
 \frac{b_\alpha^T \bar{v}}{ a^T \bar{v}} 
 + \frac{1}{(a^T \bar{v})^3}   a^T \var[v]  (   a b_\alpha^T  - b_\alpha a^T )\bar{v} \quad \text{ and } \\
\label{eq:var-bvav}
\var \left [ \frac{b_\alpha^T v}{a^T v} \right] &  \approx
\frac{1}{(a^T \bar{v})^4}     \bar{v}^T (b_\alpha a^T - ab_\alpha^T) \var[v] (a b_\beta^T-b_\beta a^T)  \bar{v}.
\end{align}
\end{lemma}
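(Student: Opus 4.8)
The plan is to treat the ratio as a smooth function of the random vector and apply a second--order Taylor (delta--method) expansion about the mean $\bar v$, restricted to the region where $a^T v > 0$ so the denominator stays away from zero. I would set $g_\alpha(v) = b_\alpha^T v / a^T v$ and write
\[
g_\alpha(v) \approx g_\alpha(\bar v) + \nabla g_\alpha(\bar v)^T (v - \bar v) + \frac{1}{2} (v-\bar v)^T \nabla^2 g_\alpha(\bar v)\, (v - \bar v).
\]
Taking expectations, the linear term vanishes because $E[v-\bar v]=0$, so the only first--approximation correction to the mean is $\frac{1}{2}\tr\!\big(\nabla^2 g_\alpha(\bar v)\,\var[v]\big)$. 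For the covariance between the $\alpha$ and $\beta$ components I would keep only the leading contribution $\cov(g_\alpha,g_\beta)\approx \nabla g_\alpha(\bar v)^T \var[v]\,\nabla g_\beta(\bar v)$, since the remaining terms are of higher order in the fluctuations of $v$.

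Next I would compute the derivatives explicitly. Differentiating gives $\nabla g_\alpha(v) = \big((a^T v) b_\alpha - (b_\alpha^T v)\,a\big)/(a^T v)^2$, and the key simplification is that the numerator is precisely $(b_\alpha a^T - a b_\alpha^T)v$, so $\nabla g_\alpha(\bar v) = (a^T\bar v)^{-2}(b_\alpha a^T - a b_\alpha^T)\bar v$. Substituting into $\nabla g_\alpha(\bar v)^T\var[v]\nabla g_\beta(\bar v)$ and using $(b_\alpha a^T - a b_\alpha^T)^T = a b_\alpha^T - b_\alpha a^T$ produces \eqref{eq:var-bvav} directly, the two sign reversals cancelling. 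A second differentiation yields $\nabla^2 g_\alpha(v) = -\big(b_\alpha a^T + a b_\alpha^T\big)/(a^T v)^2 + 2(b_\alpha^T v)\,a a^T/(a^T v)^3$; contracting with $\var[v]$, using its symmetry to write $\tr(b_\alpha a^T \var[v]) = \tr(a b_\alpha^T \var[v]) = a^T\var[v] b_\alpha$, and collecting the pieces collapses the trace to $\frac{1}{2}\tr(\nabla^2 g_\alpha(\bar v)\var[v]) = (a^T\bar v)^{-3}\,a^T\var[v]\,(a b_\alpha^T - b_\alpha a^T)\bar v$, which is the correction term in \eqref{eq:mean-bvav}.

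The computation is essentially bookkeeping, so the main thing to get right is not an isolated hard step but rather (i) spotting the antisymmetric combination $(b_\alpha a^T - a b_\alpha^T)\bar v$ that makes both right--hand sides compact and the sign conventions consistent, and (ii) being candid that this is an asymptotic rather than exact statement: the expansion is justified only when $v$ concentrates near $\bar v$, the neglected remainder involves third and higher moments of $v$ (and, for the covariance, cross terms between the linear and quadratic parts), and one implicitly assumes $a^T v$ stays bounded away from zero over the bulk of the distribution. In the use made of it in Theorem~\ref{thm:under-noise} this is exactly the small--$\sigma$ regime, with $\nu^2\sigma^2$ playing the role of the expansion parameter.
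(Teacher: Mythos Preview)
Your proposal is correct and follows essentially the same route as the paper: a second--order Taylor (delta--method) expansion of $g_\alpha(v)=b_\alpha^T v/a^T v$ about $\bar v$, with the mean correction read off from $\tfrac12\tr(\nabla^2 g_\alpha(\bar v)\var[v])$ and the covariance from the outer product of the gradients. Your identification of the antisymmetric factor $(b_\alpha a^T - a b_\alpha^T)\bar v$ and the trace simplification matches the paper's computation, and your remarks on the regime of validity (concentration near $\bar v$, $a^T v$ bounded away from zero, small $\nu^2\sigma^2$) are if anything more explicit than the paper's own treatment.
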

\begin{proof}
To expand $b^T v / a^T v = (a_i v^i)^{-1} (b_j v ^j)$ for any $\alpha$ compute
\begin{align*}
\label{eq:1st2ndder}
\frac{\partial}{\partial v^k} (a_i v^i)^{-1} (b_j v^j) & = - (a_i v^i)^{-2} a_k (b_j v^j) +  (a_i v^i)^{-1} b_k \text{ and } \\
\frac{\partial^2}{\partial v^\ell v^k} (a_i v^i)^{-1} (b_j v^j) &= 2 (a_i v^i)^{-3} a_\ell a_k (b_j v^j) -  (a_i v^i)^{-2} a_k b_\ell -  (a_i v^i)^{-2} a_\ell b_k,
\end{align*}
so the first terms of the Taylor series around $\bar{v}$ read
\begin{multline}
\label{eq:taylor-bvav}
\frac{b_\alpha^T v}{ a^T v} \approx 
 \frac{b_\alpha^T \bar{v}}{ a^T \bar{v}} + \frac{\bar{v}^T}{(a^T \bar{v})^2} \left( b_\alpha a^T -  a b_\alpha^T \right) (v-\bar{v}) \\
 + \frac{1}{(a^T \bar{v})^2}(v - \bar{v})^T  \left( \frac{b_\alpha^T \bar{v}}{ a^T \bar{v}} a a^T - \frac{1}{2}(a b_\alpha^T + b_\alpha a^T) \right) (v-\bar{v}) + \ldots
\end{multline}
Taking expectation in \eqref{eq:taylor-bvav} 
\begin{equation*}
E\left [ \frac{b_\alpha^T v}{a^T v} \right]  \approx 
\frac{b_\alpha^T \bar{v}}{ a^T \bar{v}} 
 + \frac{1}{(a^T \bar{v})^2}  \tr \left( \Big( \frac{b_\alpha^T \bar{v}}{ a^T \bar{v}} a a^T - \frac{1}{2}(a b_\alpha^T + b_\alpha a^T) \Big) E[(v-\bar{v}) (v - \bar{v})^T] \right)
\end{equation*}
 which  leads to \eqref{eq:mean-bvav}. Using only the terms  up to 1st order in \eqref{eq:taylor-bvav} gives
\begin{multline*}
E\left [ \left(\frac{b_\alpha^T v}{a^T v}  - E\left [ \frac{b_\alpha^T v}{a^T v} \right ] \right)
\left(\frac{b_\beta^T v}{a^T v}  - E\left [ \frac{b_\beta^T v}{a^T v} \right
] \right) \right]  \\
\approx 
 \frac{\tr \left( (a b_\beta ^T - b_\beta a^T) \bar{v}\bar{v}^T (b_\alpha a^T - a b_\alpha^T) E[(v-\bar{v}) (v - \bar{v})^T] \right)}{(a^T \bar{v})^4},
\end{multline*}
from which \eqref{eq:var-bvav} results.
\end{proof}

\section*{Acknowledgments}
The author is grateful for the encouragement and questioning  of his students. This work was made possible by  readers like you.

\bibliographystyle{../../styles/siamart_0516/siamplain}
\bibliography{../../bibtex/bary,%
../../bibtex/acelera,%
../../bibtex/MIMOsysid}

\end{document}